\begin{document} 
\newcommand{\tensor}{\otimes}
\renewcommand{\H}{{\mathbb H}}
\newcommand{\Hc}{\mathop {{\mathbb H}_{\C}}}
\renewcommand{\S}{{\mathbb S}}
\newcommand{\A}{{\mathbb A}}
\newcommand{\B}{{\mathbb B}}
\newcommand{\C}{{\mathbb C}}
\newcommand{\N}{{\mathbb N}}
\newcommand{\Q}{{\mathbb Q}}
\newcommand{\QQ}[1]{\left<#1,#1\right>}
\newcommand{\Z}{{\mathbb Z}}
\renewcommand{\P}{{\mathbb P}}
\newcommand{\R}{{\mathbb R}}
\newcommand{\rc}{\subset}
\newcommand{\rank}{\mathop{rank}}
\newcommand{\trace}{\mathop{tr}}
\newcommand{\dimc}{\mathop{dim}_{\C}}
\newcommand{\Lie}{\mathop{Lie}}
\newcommand{\Spec}{\mathop{Spec}}
\newcommand{\Auto}{\mathop{{\rm Aut}_{\mathcal O}}}
\newcommand{\alg}[1]{{\mathbf #1}}
\newcommand{\XXX}{XXX}
\let\Realpart\Re
\renewcommand{\Re}{\mathop{\Realpart e}}
\let\Impart\Im
\renewcommand{\Im}{\mathop{\Impart m}}
\newtheorem{lemma}{Lemma}[section]
\newtheorem*{definition}{Definition}
\newtheorem*{claim}{Claim}
\newtheorem{corollary}{Corollary}
\newtheorem*{Conjecture}{Conjecture}
\newtheorem*{SpecAss}{Special Assumptions}
\newtheorem{example}{Example}
\newtheorem*{remark}{Remark}
\newtheorem*{observation}{Observation}
\newtheorem*{fact}{Fact}
\newtheorem*{remarks}{Remarks}
\newtheorem{proposition}[lemma]{Proposition}
\newtheorem{theorem}[lemma]{Theorem}

\numberwithin{equation}{section}
\def\labelenumi{\rm(\roman{enumi})}
\title{%
On a quaternionic Picard theorem
}
\author {Cinzia Bisi \& J\"org Winkelmann}
\begin{abstract}
  The classical theorem of Picard states that a non-constant holomorphic
  function $f:\C\to\C$ can avoid at most one value.

  We investigate how many values a
  non-constant slice regular function of a quaternionic variable
  $f:\H\to\H$ may avoid.
\end{abstract}
\subjclass{30G35}%
\address{%
Cinzia Bisi \\
Department of Mathematics and Computer Sciences\\
Ferrara University\\
Via Machiavelli 35\\
44121 Ferrara \\
Italy
}
\email{bsicnz@unife.it \newline
 ORCID : 0000-0002-4973-1053
}
\address{%
J\"org Winkelmann \\
IB 3/111\\
Lehrstuhl Analysis II \\
Fakult\"at f\"ur Mathematik\\
Ruhr-Universit\"at Bochum\\
44780 Bochum \\
Germany\\
}
\email{joerg.winkelmann@rub.de\newline
  ORCID: 0000-0002-1781-5842
}
\thanks{
The two authors were partially supported by GNSAGA of INdAM.
C. Bisi was also partially supported by PRIN \textit{Variet\'a reali e complesse:
geometria, topologia e analisi armonica}. 
}
\maketitle

\section{Introduction}

A function $f:\C\to\C$ which is given by a globally convergent
power series $f(z)=\sum_{k=0}^{\infty} a_kz^k$ ($a_k\in\C$)
is called an {\em entire function}.
By the theorem of Picard, a non-constant entire function $f:\C\to\C$ can
avoid at most one value, \cite{P1}, \cite{P2}, \cite{P3}.

Our goal is a similar statement for {\em entire slice regular functions}, i.e.,
for functions $f:\H\to \H$ (where $\H$ denotes the skew field
of quaternions) which are given as a globally convergent power series
$f(q)=\sum_{k=0}^{\infty} q^k a_k$ ($a_k\in\H$).

For a function $f:\H\to\H$ being ``slice regular'' is equivalent to
the assumption that for every imaginary unit $I\in\S$ its restriction
to $\C_I=\{x+yI:x,y \in\R\}$ is holomorphic
with respect to the complex structures induced
by left multiplication by $I$, see \cite{GSS,GS}.

Here we show the following
\begin{enumerate}
\item
  For every $2$-dimensional real affine subspace $P$ of $\H\simeq\R^4$,
  there exists an entire slice regular function $f:\H\to\H$ such that
  $f(\H)=\H\setminus P$. In particular, for
  every triple $q_1,q_2,q_3\in\H$ there is an entire slice regular
  function avoiding these three values.
\item
  Let $q_1,\ldots,q_5\in\H$ be in general position (i.e.~these five
  quaternions are not contained in any $3$-dimensional real affine subspace
  of $\H$). Then every entire slice regular function avoiding all these five
  values must be constant.
  In particular, for every non-constant entire slice regular function
  the image is dense
  in $\H$.
\end{enumerate}

We do not know whether an entire slice regular function may
avoid a generic choice of four quaternionic numbers.

 A key tool 
 is the following fundamental correspondence
 (see Proposition~\ref{2.2}):

 {\em
   Let $f$ be a slice regular function and $F$ its stem function.
  Let $x,y\in\R$ and $c\in\H$. Then there exists an imaginary unit
  $I\in\S$ such that $f(x+yI)=c$ if and only
  $F(x+yi)-c\tensor 1$ is a zero divisor in the algebra $\H\tensor\C$.
}
 \\
Maybe this work can be of some inspiration in studying Hyperbolic Quaternionic Slice Regular Manifolds : indeed recently  many examples  of quaternionic slice regular manifolds have been introduced, see for example \cite{BG},\cite{AnB}.

 \section{Preparations}
 \subsection{Quaternions}
 The quaternionic numbers are a real $4$-dimensional skew field $\H$,
 which may be described as the non-commutative $\R$-algebra with $1$
 generated by $I,J,K$ with $I^2=J^2=K^2=-1$,
 $K=IJ=-JI$, $I=JK=-KJ$ and  $J=KI=-IK$.

 The set of all elements $q\in \H$ with $q^2=-1$ is called
 the set of {\em imaginary units} and denoted by $\S$.

 One may check easily that
 \[
 \S=\{c_2I+c_3J+c_4K: c_i\in\R, \sum_{i=2}^4 c_i^2=1\}.
 \]
 
\subsection{Slice regular functions and Stem functions}
We recall the theory of slice regular functions and their stem functions
(\cite{GS}, \cite{ghiloniperotti}).

An {\em entire slice regular function} $f:\H\to\H$ is a function which is
given by a globally convergent power series $f(q)=\sum_{k=0}^\infty q^ka_k$
(with $a_k\in\H$).

A {\em stem function} $F$ is a holomorphic map from $\C$ to
the $\C$-algebra $\Hc=\H\tensor_{\R}\C$ such that $\overline{F(z)}=F(\bar z)$.
The tensor product $\H\tensor_{\R}\C$
inherits a complex structure from its second
factor, $\C$, hence it makes sense to talk about holomorphicity and
complex conjugation.

In explicit terms, the stem function $F$ associated to a slice
regular function
$f(q)=\sum_{k=0}^\infty q^ka_k$ may be defined as
$F(z)=\sum_{k=0}^\infty a_k\tensor z^k$.

Equivalently, the correspondence may be described as follows:
\[
F(x+yi)=F_1(x+yi)\tensor 1 +F_2(x+yi)\tensor i
\]
with
\[
F_1(x+yi)=\frac{1}{2}\left( f(x+yI)+f(x-yI) \right)
\]
and
\[
F_2(x+yi)=-\frac{1}{2}I\left( f(x+yI)-f(x-yI) \right).
\]
For a slice regular function $f$ the terms on the right hand side
can be shown to be independent of the choice of the imaginary unit $I$.

Conversely, one has
\[
f(x+yH)=F_1(x+yi)+HF_2(x+yi)\quad\forall x,y\in\R,H\in\S.
\]

\subsection{A remarkable quadric in $\Hc$}

The euclidean scalar product on $\H\simeq\R^4$ induces a complex
symmetric bilinear form $\left<\ ,\ \right>$ on $\Hc$.
Explicitly: $\left<z,w\right>=\sum_{i=1}^4  z_iw_i$.

We observe that $\Hc$ naturally carries the structure of
an $\R$-algebra.

Both the field of complex numbers $\C$ and the quaternionic
skew field $\H$ embed into $\Hc$ via $z\mapsto 1\tensor z$
resp.~$q\mapsto q\tensor 1$. In this way we may regard
$\C$ and $\H$ as subrings of $\Hc$.

\begin{proposition}\label{prop-quadric}
  Let $v=1\tensor v_0 + I\tensor v_1 + J\tensor v_2
  + K\tensor v_3=v'\tensor 1+v''\tensor i$ (with $v_i \in\C$, $v',v''\in\H$)
  be an element of $\Hc$.

  Then the following are equivalent:
  \begin{enumerate}
  \item
    $v$ is a zero divisor, i.e., there exists an element $w\in\Hc$,
    $w\ne 0$ with $w\cdot v=0$,
  \item
    $\QQ{v}=0$, i.e., 
    $\sum_{i=0}^3 v_i^2=0$.
  \item
    There exists an imaginary unit $H\in\S$ such that $Hv'=v''$.
    (Geometrically: The vectors $v'$ and $v''$ are orthogonal.)
  \end{enumerate}
\end{proposition}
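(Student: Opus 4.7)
The plan is to introduce on $\Hc$ the $\C$-linear extension of standard quaternionic conjugation, $\bar{\cdot}: \Hc \to \Hc$, acting as $v_0+v_1I+v_2J+v_3K \mapsto v_0-v_1I-v_2J-v_3K$ with $v_i\in\C$. A direct computation from the multiplication table shows $v\bar v = \bar v v = \sum_{i=0}^3 v_i^2 = \QQ{v}$, and that this element lies in the central copy of $\C \hookrightarrow \Hc$. This ``norm = quadratic form'' identity is the backbone of the proposition.

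For (1) $\Leftrightarrow$ (2), I would argue by invertibility. If $\QQ{v}\neq 0$, then it is a unit in $\C$ and $v$ admits the two-sided inverse $\bar v / \QQ{v}$, so $v$ cannot be a zero divisor. Conversely, if $\QQ{v}=0$ but $v\neq 0$, then $\bar v\neq 0$ and $\bar v\cdot v = \QQ{v} = 0$ exhibits $v$ as a zero divisor; the case $v=0$ is trivial. (This is essentially the matrix-algebra fact that $\Hc\cong M_2(\C)$ sends $\QQ{v}$ to the determinant, and a $2\times 2$ matrix is either invertible or a zero divisor.)

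For (2) $\Leftrightarrow$ (3), I would unwind the two coordinate systems on $\Hc$. Writing $v_i = v'_i + iv''_i$ where $v' = \sum v'_i e_i$, $v'' = \sum v''_i e_i$ with $e_0,e_1,e_2,e_3 = 1,I,J,K$ and $v'_i,v''_i\in\R$, separating real and imaginary parts of $\sum v_i^2$ yields
\[
\QQ{v} = \bigl(|v'|^2-|v''|^2\bigr) + 2i\,\langle v',v''\rangle_\R,
\]
so $\QQ{v}=0$ is equivalent to $|v'|=|v''|$ together with $v'\perp v''$ in $\H\cong\R^4$. If moreover $Hv'=v''$ for some $H\in\S$, then $|v''|=|H|\,|v'|=|v'|$ and $\langle v',v''\rangle_\R = \Re(v'\,\overline{Hv'}) = |v'|^2\,\Re(\bar H) = 0$, since $H$ is purely imaginary; this gives (3) $\Rightarrow$ (2). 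For the converse, assuming $v'\neq 0$ I would set $H := v''(v')^{-1} = v''\bar{v'}/|v'|^2$ and verify that $\Re(H) = \langle v'',v'\rangle_\R/|v'|^2 = 0$ and $|H| = |v''|/|v'| = 1$, so $H$ is purely imaginary of norm $1$, hence $H\in\S$, and $Hv'=v''$ by construction. The degenerate case $v'=0$ forces $v''=0$ and any $H\in\S$ works.

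No step poses a genuine obstacle; the heart of the argument is choosing the conjugation on $\Hc$ so that $v\bar v$ reproduces the symmetric bilinear form of condition (2), and then carefully tracking the real-vs-complex coordinate change between $v_i\in\C$ and $v',v''\in\H$. Everything else is a short mechanical verification.
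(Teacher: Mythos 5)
Your proof is correct, but it links the three conditions along a different path than the paper. For $\text{(ii)}\iff\text{(iii)}$ you carry out exactly the computation the authors use, namely $\QQ{v}=\QQ{v'}-\QQ{v''}+2i\left<v',v''\right>$, only adding the explicit formula $H=v''(v')^{-1}$ that the paper leaves implicit. The genuine divergence is in how zero divisors enter: the paper proves $\text{(i)}\iff\text{(iii)}$ directly, normalizing to $v'=1$ (using that $\H$ has no zero divisors) and solving $w\cdot v=0$ by hand to conclude $v''\in\S$, which in particular produces the explicit annihilator $w=1\tensor 1-H\tensor i$. You instead prove $\text{(i)}\iff\text{(ii)}$ via the $\C$-linear extension of quaternionic conjugation and the identity $v\bar v=\bar v v=\QQ{v}\in 1\tensor\C$, so that $v$ is a two-sided unit when $\QQ{v}\ne 0$ and is annihilated by $\bar v\ne 0$ when $\QQ{v}=0$. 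This is the standard reduced-norm dichotomy (unit versus zero divisor) in the split algebra $\Hc\cong M_2(\C)$; it is arguably more conceptual and avoids the paper's normalization and case analysis, at the cost of not exhibiting the simple annihilator $1\tensor 1-H\tensor i$ attached to the geometric description in (iii). All the small points you need (centrality of $1\tensor\C$, the degenerate cases $v=0$ and $v'=0$) are handled correctly, so the argument is complete.
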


The above equivalence $(i)\iff(ii)$ is contained in \cite{S} where it is
attributed to Hamilton, while $(ii)\iff(iii)$ may be deduced from the
work of Mongodi (\cite{M}). In addition, these equivalences may be obtained
as a special case of a result of Ghiloni and Perotti
(\cite{ghiloniperotti}, Theorem~17 on page 1679).

For the convenience of the reader we nevertheless give a proof
here.

\begin{proof}
  $\text{(i)}\implies\text{(iii)}$:
  We assume that $v$ is a zero divisor (but $v\ne 0$).
  Since $\H$ has no zero divisors, it follows that $v',v''\ne 0$.
  Now $v'\in\H^*$ and $v$ being a zero divisor, imply that
  $v\cdot\left( (v')^{-1}\tensor 1\right)$ is again a zero divisor.
  Hence we may assume that $v'=1$. The same reasoning also shows
  that we can find an element $w=w'+w''\tensor i$ with
  $w'=1$ and $w\cdot v=0$.
  Thus we obtain
  \[
  0=w\cdot v=(1+w''\tensor i)\cdot(1+v''\tensor i)
  = (1 - w''v'')\tensor 1 + (v''+w'')\tensor i.
  \]
  Hence $v''=-w''$ and $(v'')^2=-v''w''=-1$, i.e., $v''\in\S$.
  In particular $v''=H\cdot 1=H\cdot v'$ for some $H\in\S$.

  $\text{(iii)}\implies\text{(i)}$:
  We have $v=(1\tensor 1+H\tensor i)\cdot v'$.
  Define $w=1\tensor 1 -H\tensor i$. Then $w\cdot v=0$, as easily seen
  by explicit calculation.

  $\text{(iii)}\iff\text{(ii)}$. Note that
  \[
  \QQ{v}=\QQ{v'+v''\tensor i}=\QQ{v'}-\QQ{v''}+2i\left<v',v''\right>.
  \]
  Hence $\QQ{v}=0$ iff $v'$ and $v''$ have the same norm and are
  orthogonal to each other. This in turn is equivalent to the
  existence of an imaginary unit $H\in\S$ with $v''=Hv'$.
\end{proof}

Thus the set of all zero divisors of $\Hc$ is a quadric subvariety
of $\Hc\simeq\C^4$.
This quadric has also been investigated by Mongodi (\cite{M}), who pointed out
the relevance for the zero locus, but not the relation with
zero divisors of the algebra $\Hc$.

\subsection{Zeroes}
Let $f$ be a slice function and let $F$ denote its stem function.
Write $F=F_1\tensor 1+F_2\tensor i$, with $F_h:\C\to\H$.
Since
\[
f(x+yI)=F_1(x+yi)+IF_2(x+yi)\ \ \forall x,y\in\R, I\in\S,
\]
implying
\[
f(x+yI)=0\ \iff\ F_1(x+yi)=-IF_2(x+yi),
\]
The following result is implied by Proposition~\ref{prop-quadric}, but
may also be deduced from \cite{M}, Proposition~4.1 in combination with
Corollary~3.4 of \cite{M}:

\begin{proposition}\label{2.2}
  Let $f:\H\to\H$ be a slice regular function and
  $F:\C\to\Hc=\H\tensor_{\R}\C$ its stem function.
  Let $x,y\in\R$. Then the following conditions are equivalent:
  \begin{enumerate}
  \item
    There exists an imaginary unit $H\in\S$ with $f(x+yH)=0$.
  \item
    $\QQ{F(x+yi)}=0$
  \item
    $F(x+yi)$ is a zero divisor in the algebra $\H\tensor_{\R}\C$.
  \end{enumerate}
\end{proposition}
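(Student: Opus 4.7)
My plan is to reduce the entire statement to a single application of Proposition~\ref{prop-quadric} at the point $v := F(x+yi)\in\Hc$. Writing, as in the preliminaries, $F(x+yi)=F_1(x+yi)\tensor 1+F_2(x+yi)\tensor i$, I identify $v'=F_1(x+yi)$ and $v''=F_2(x+yi)$ in the notation of that proposition. Then $\QQ{v}=0$ and ``$v$ is a zero divisor in $\Hc$'' are by definition conditions (ii) and (i) of Proposition~\ref{prop-quadric}, so the equivalence (ii)$\iff$(iii) of the present statement is simply read off from the earlier result, with no further work.

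For (i)$\iff$(iii), I would invoke the reconstruction formula $f(x+yH)=F_1(x+yi)+HF_2(x+yi)$ recalled in Section~2.2. Thus (i) is equivalent to the existence of $H\in\S$ with $v'=-Hv''$. Multiplying on the left by $-H$ and using $H^2=-1$, this in turn is equivalent to $v''=Hv'$, which is precisely condition (iii) of Proposition~\ref{prop-quadric}. By that proposition, this is equivalent to $v$ being a zero divisor in $\Hc$, i.e.\ condition (iii) of the present statement. Chaining through Proposition~\ref{prop-quadric} therefore closes all three equivalences.

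The only point I want to check carefully is the degenerate case $v'=0$ or $v''=0$. Since every $H\in\S$ is a unit in $\H$, the equation $v''=Hv'$ (and likewise $v'=-Hv''$) forces the other vector to vanish as well; correspondingly $f(x+yH)=0$ for some (equivalently, every) $H\in\S$ reduces to $v'=v''=0$. So the equivalence survives these boundary cases without extra hypotheses. Beyond this routine bookkeeping, the proof is a one-line translation, and I do not expect any substantive obstacle: the real content has been absorbed into Proposition~\ref{prop-quadric}.
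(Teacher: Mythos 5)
Your proof is correct and follows essentially the same route as the paper: the paper likewise uses the reconstruction formula $f(x+yH)=F_1(x+yi)+HF_2(x+yi)$ to translate condition (i) into $F_1=-HF_2$, i.e.\ $Hv'=v''$, and then reads off all equivalences from Proposition~\ref{prop-quadric} applied to $v=F(x+yi)$. Your extra check of the degenerate cases is harmless and consistent with the paper's treatment.
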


This has the following consequence:
let $c\in\H$. Then a slice regular function $f$ avoids $c$
as value (i.e., $f(\H)\subset\H\setminus \{c\}$)
if and only if $z\mapsto F(z)-c\tensor 1$ has no zero which happens
if and only if
the entire function
\[
Q_c: z\mapsto\left< F(z)-c,F(z)-c\right>
=\left<F(z),F(z)\right>-2\left<F(z),c\right>+\left<c,c\right>
\]
has no zeroes.

\section{Avoiding $5$ generic values}

The purpose of this section is to show that a
non-constant entire slice regular
function can not avoid $5$ values if these are generic in the following
sense: there is no real $3$-dimensional affine subspace of $\mathbb{H}\simeq\R^4$
containing all of them.

We start with some preparations.

First we recall two results of Noguchi on holomorphic curves
in semi-abelian varieties. Here we do not need to deal with arbitrary
semi-abelian varieties, it suffices to know that $(\C^*)^g$ is a
semi-abelian variety.

\begin{proposition}[Logarithmic Bloch Ochiai Theorem]\label{LBO}
  Let $f:\C\to G=(\C^*)^g$ be a holomorphic map and let
  $X$ denote the Zariski closure of its image.

  Then $X$ is an orbit of  an algebraic subgroup $H$ of
  $G=(\C^*)^g$ (acting by left multiplication), i.e.,
  there is an element $\lambda=(\lambda_1,\ldots,\lambda_g)\in G=(\C^*)^g$
  such that
  \[
  X=\{\lambda \cdot h: h\in H\}
  \]
\end{proposition}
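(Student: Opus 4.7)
The plan is to invoke logarithmic jet differential techniques in the spirit of Bloch--Ochiai--Kawamata, adapted to the semi-abelian setting. Write $X \rc G = (\C^*)^g$ for the Zariski closure of $f(\C)$; this is an irreducible closed algebraic subvariety. Its stabilizer $H = \{h \in G : hX = X\}$ is a closed algebraic subgroup, and $X$ is a union of $H$-orbits. So it suffices to show that the image of $X$ in the quotient semi-abelian variety $G/H \simeq (\C^*)^{g-\dim H}$ is a single point. Replacing $(G, X, f)$ by $(G/H, X/H, p \circ f)$, where $p: G \to G/H$ is the quotient map, I may therefore reduce to the case that the stabilizer is trivial, and aim to conclude $\dim X = 0$.

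To bring analysis on $f$ to bear, I compactify $G$ by $\overline{G} = (\P^1)^g$ with simple normal crossing boundary $D = \overline{G} \setminus G$. Then $\Omega^1_{\overline{G}}(\log D)$ is globally trivial, with frame $dz_i/z_i$, $i=1,\ldots,g$. Let $\overline{X}$ denote the closure of $X$ in $\overline{G}$ and let $\pi : \tilde{X} \to \overline{X}$ be a log resolution with boundary $\tilde{D} = \pi^{-1}(\overline{X} \cap D)$. Restriction of the global frame produces $g$ global sections of $\Omega^1_{\tilde{X}}(\log \tilde{D})$, and hence abundant sections of every symmetric power $S^k \Omega^1_{\tilde{X}}(\log \tilde{D})$.

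The decisive technical input is a vanishing-type assertion: when the stabilizer of $X$ in $G$ is trivial and $\dim X > 0$, some symmetric power $S^k \Omega^1_{\tilde{X}}(\log \tilde{D})$ admits a global section that vanishes along an effective divisor containing a component which is big on $\tilde{X}$. Geometrically, triviality of the stabilizer forces the Gauss-type map from $X$ to the Grassmannian of subspaces of $\Lie(G)$ to be non-constant, and combining this non-triviality with the Kawamata--Ueno structure theorem for subvarieties of semi-abelian varieties yields the existence of such log-jet differentials with prescribed zeroes. This is where the nontrivial algebraic-geometric content of the theorem lives, and is the main obstacle in a self-contained proof.

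Given such a section $s$, I would finish by Nevanlinna theory. Pulling back along $f$, one evaluates $f^*(dz_i/z_i) = df_i/f_i$ as a logarithmic derivative, and by the Lemma on Logarithmic Derivatives its proximity function grows like $o(T(r,f))$ outside an exceptional set of finite measure. Hence $f^* s$ has characteristic of order $o(T(r,f))$. On the other hand, the Second Main Theorem applied to the big part of the divisor $\{s=0\}$ would force $f^* s$ to have characteristic of order $T(r,f)$, producing a contradiction unless $f^* s \equiv 0$. But $f^* s \equiv 0$ sends $f(\C)$ into the vanishing locus of $s$, a proper subvariety of $X$, contradicting the definition of $X$ as the Zariski closure of $f(\C)$. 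Thus $\dim X = 0$, concluding the proof.
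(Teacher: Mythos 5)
The paper offers no proof of this proposition: it is imported verbatim from the literature, with the one-line justification ``See Main Theorem (i) in \cite{N}.'' So the benchmark here is a citation, not an argument, and any self-contained proof would necessarily go beyond what the authors wrote.

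Your outline does capture the correct architecture of the known proofs (Ochiai, Kawamata, Noguchi): quotient by the stabilizer $H=\mathrm{Stab}(X)$ to reduce to the case $\mathrm{Stab}(X)=\{e\}$ with the goal $\dim X=0$, compactify $(\C^*)^g$ inside $(\P_1)^g$ with the log frame $dz_i/z_i$, and close the argument with the Lemma on Logarithmic Derivatives. The reduction step is sound (one should check that $p(X)$ is closed and is the Zariski closure of $p\circ f(\C)$, and that its stabilizer is trivial, but these are routine). The problem is the step you yourself flag as ``the decisive technical input'': the existence, when $\mathrm{Stab}(X)$ is trivial and $\dim X>0$, of a symmetric logarithmic differential on a log resolution of $\overline{X}$ vanishing on a big divisor. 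This is not a lemma you can wave at -- it \emph{is} the theorem. Your appeal to ``the Kawamata--Ueno structure theorem for subvarieties of semi-abelian varieties'' to manufacture this section is close to circular: that structure theorem tells you $X$ is of log general type when its stabilizer is trivial, but converting log general type into the degeneracy of the entire curve $f$ is exactly the content of the Bloch--Ochiai/Noguchi theorem being proved, and in the actual proofs it is handled by an induction on jet spaces (or, in Noguchi's formulation, by a Second Main Theorem for jet lifts into semi-abelian varieties), not by a single symmetric differential on $X$ itself. The final Nevanlinna step is also asserted rather than executed: ``the Second Main Theorem applied to the big part of the divisor $\{s=0\}$'' presupposes an SMT on the possibly singular, non-general-type-resolved $\tilde X$ that you have not stated or justified. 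As written, the proposal is an honest road map with the central span of the bridge missing; the defensible course here is the one the paper takes, namely to cite Noguchi's Main Theorem (i).
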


See Main Theorem $\text{(i)}$ in \cite{N}.

\begin{proposition}\label{NogD}
  Let
  \[
  f:\Delta^*=\{z\in\C:0<|z|<1\}\to G=(\C^*)^g
  \subset \bar G=(\P_1)^g
  \]
  be a holomorphic map
  and let $X$ denote the Zariski closure of its image.
  Define
  \[
  Stab(X)=\{g\in G: g\cdot x\in X\ \forall x \in X\}
  \]
  If $Stab(X)$ is discrete, then
  $f$ extends to a holomorphic map from $\Delta$ to $\bar G$.
\end{proposition}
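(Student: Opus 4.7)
The plan is to argue by contradiction. Assume $f:\Delta^*\to G$ does not extend to a holomorphic map $\Delta\to\bar G$. The strategy is to construct from this obstruction a nonconstant entire curve $g:\C\to G$ whose image lies in $X$; applying Proposition~\ref{LBO} to $g$ produces a positive-dimensional algebraic subgroup $H\subset G$ together with $\lambda\in G$ such that $\lambda\cdot H\subset X$, and an argument using the irreducibility of $X$ promotes this to $H\cdot X=X$, i.e.\ $H\subset Stab(X)$, contradicting the discreteness hypothesis.

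To manufacture $g$, I would equip $\Delta^*$ with its Poincar\'e metric and $\bar G=(\P_1)^g$ with a fixed smooth K\"ahler form $\omega$. A direct computation shows that the Poincar\'e area of a punctured neighbourhood of $0$ is finite, so if $\|df\|$ were uniformly bounded with respect to these two metrics, $f$ would pull $\omega$ back to a form of finite total mass near $0$; a Bishop-type extension theorem for meromorphic maps into compact targets would then extend $f$ across the puncture, contradicting our assumption. Hence there is a sequence $z_n\to 0$ along which $\|df(z_n)\|\to\infty$, and Brody's reparametrization lemma applied to discs centred at $z_n$ yields a nonconstant entire map $g:\C\to\bar G$ whose image lies in the cluster set of $f$ at $0$, hence in the closure $\bar X$ of $X$ in $\bar G$.

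The main obstacle is to ensure that $g(\C)$ lies inside $G$ rather than meeting the boundary divisor $\bar G\setminus G$; this is a hyperbolic-embedding statement for $X$ in $\bar X$ and is precisely where the semi-abelian structure of $G$ enters. I would address it by coordinatewise analysis, examining each projection $g_j:\C\to\P_1$ and ruling out values in $\{0,\infty\}$ through Nevanlinna-type estimates on the logarithmic derivatives of the components of $f$ (here one genuinely exploits that $G=(\C^*)^g$, not just that $\bar G$ is a product of $\P_1$'s). Once $g(\C)\subset G$ is secured, Proposition~\ref{LBO} yields a positive-dimensional $H\subset G$ with $\lambda\cdot H\subset X$. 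Varying the base sequence $z_n$ shows that $H$ depends only on the asymptotic behaviour of $f$ at $0$, so a Zariski-dense family of translates of $H$ lies in $X$; irreducibility of $X$ then gives $H\cdot X=X$ and hence $H\subset Stab(X)$, the desired contradiction. The boundary-avoidance step in the third paragraph is the genuine hard part and the reason the statement is attributed to Noguchi.
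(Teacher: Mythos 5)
Your overall strategy (finite-area criterion for extension across the puncture, Brody reparametrization at the boundary, then the Logarithmic Bloch--Ochiai theorem) is a recognized route to big-Picard-type statements, and the first reduction is sound: a bounded differential from the Poincar\'e metric of $\Delta^*$ to a K\"ahler metric on $(\P_1)^g$ gives finite area of the graph near the puncture, hence meromorphic (so holomorphic) extension, so non-extendibility does produce a Brody limit $g:\C\to\bar G$ with image in the cluster set of $f$ at $0$. But there are two genuine gaps, and the first sits exactly where the whole difficulty of the proposition lies. There is no reason for the Brody limit to land in $G=(\C^*)^g$ rather than in the boundary divisor: the cluster set of $f$ at $0$ will in general meet $\bigcup_j\{z_j=0\}\cup\{z_j=\infty\}$, and a reparametrized limit can perfectly well be a nonconstant curve lying entirely in a boundary stratum (itself a lower-dimensional torus, to which Proposition~\ref{LBO} applies but whose subgroups are not subgroups of $G$, so no contradiction with the discreteness of $Stab(X)$ results). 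Your plan to exclude this ``through Nevanlinna-type estimates on the logarithmic derivatives'' is not an argument but a pointer to the actual content of Noguchi's theorem; as written the step is missing, and the assertion $g(\C)\subset G$ on which the remainder of the proof depends is false as a general statement about Brody limits of maps into $(\C^*)^g$.

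The second gap is the passage from $\lambda\cdot H\subset X$ to $H\subset Stab(X)$. A subvariety can contain a single translate of a positive-dimensional subgroup without being invariant under that subgroup, so you need a Zariski-dense family of translates of $H$ inside $X$ before irreducibility lets you conclude $H\cdot X=X$. The claim that ``varying the base sequence $z_n$'' produces such a family is not substantiated: different subsequences may yield limits with the same image, or with images accumulating on a proper subvariety of $X$. For comparison, the paper does not attempt any of this: it deduces the proposition directly from Theorem~4.5 and Lemma~4.1 of \cite{Nog2}, where non-extendibility forces $f(\Delta^*)$ into a subvariety $W$ whose stabilizer is non-discrete; since $X$ is the Zariski closure of the image, $X=W$ and $Stab(X)$ is non-discrete, which is the contrapositive of the statement. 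A self-contained proof along your lines would in effect reprove Noguchi's extension theorem, and the two steps above are precisely the ones his logarithmic-derivative lemma is designed to supply.
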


\begin{proof}
This is a consequence of Theorem 4.5. of \cite{Nog2}, applied with
taking the Zariski closure of $f(\Delta^*)$ as $X$.
In the notation of \cite{Nog2} non-extendibility of $f$ implies
$f(\Delta^*)\subset W$. Since we take $X$ to be the Zariski closure of
the image of $f$, the inclusion $f(\Delta^*)\subset W$ implies $X=W$.
In view of Lemma~4.1 in \cite{Nog2} the condition $X=W$ implies
that $Stab(X)$ is not discrete.
\end{proof}
  
\begin{proposition}\label{NBO}
  Let $Z$ be an algebraic subvariety of $G=(\C^*)^5$.
  Assume that there exists a non-constant holomorphic
  map $g:\C\to Z$ with $g(z)=\overline{g(\bar z)}$
  for all $z\in\C$.

  Then there exist $\alpha_1,\ldots,\alpha_5\in\R^*$
  and $(m_1,\ldots,m_5)\in\Z^5\setminus\{(0,\ldots,0)\}$
  such that $\zeta(\C^*)\subset Z$
  for 
  \[
  \zeta(z)\stackrel{def}{=}
  \left( \alpha_1z^{m_1},\ldots,\alpha_5z^{m_5}\right).
  \]
\end{proposition}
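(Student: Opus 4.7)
The plan is to apply the Logarithmic Bloch--Ochiai theorem (Proposition~\ref{LBO}) directly to $g$, viewed as a holomorphic map $\C\to G=(\C^*)^5$. This yields $X=\lambda H$, where $X$ is the Zariski closure of $g(\C)$ in $G$, $H$ is an algebraic subgroup of $G$, and $\lambda\in G$. Since $g(\C)\subset Z$ and $Z$ is Zariski closed, I get $X\subset Z$. Non-constancy of $g$, together with continuity and connectedness of $\C$, forces $g(\C)$ to be infinite, so $\dim X\geq 1$ and hence $\dim H\geq 1$.

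Next, I would use the reality condition $g(z)=\overline{g(\bar z)}$ to arrange a real coset representative. Evaluating at $z=0$ gives $g(0)=\overline{g(0)}$, so $g(0)\in(\R^*)^5$. Since any element of a coset may serve as its representative, I replace $\lambda$ by $\alpha:=g(0)=(\alpha_1,\ldots,\alpha_5)\in(\R^*)^5$ and write $X=\alpha H$.

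For the last step, observe that the identity component $H^0$ of $H$ is a positive-dimensional algebraic subtorus of $(\C^*)^5$, hence admits a non-trivial cocharacter, necessarily of the form $\phi(t)=(t^{m_1},\ldots,t^{m_5})$ for some $(m_1,\ldots,m_5)\in\Z^5\setminus\{0\}$. Then
\[
\zeta(t)=\alpha\cdot\phi(t)=(\alpha_1 t^{m_1},\ldots,\alpha_5 t^{m_5})\in\alpha H=X\subset Z
\]
for every $t\in\C^*$, which is precisely the conclusion.

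I do not foresee a substantial obstacle: the argument essentially combines the deep LBO input with two elementary remarks---that $g(0)$ provides a real point of $X$, and that every positive-dimensional subtorus of $(\C^*)^5$ admits a non-trivial integral cocharacter. If anything needs care it is the first of these, since one must ensure the coset is literally translated to a real base-point rather than merely being real-invariant as a set. Proposition~\ref{NogD} does not appear to be needed for this statement and is presumably invoked elsewhere in the paper.
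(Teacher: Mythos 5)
Your proposal is correct and follows essentially the same route as the paper: apply the Logarithmic Bloch--Ochiai theorem to identify the Zariski closure of $g(\C)$ with a coset $\alpha H$, take $\alpha=g(0)\in(\R^*)^5$ via the reality condition, and pick a nontrivial one-parameter subgroup (cocharacter) of $H$. You are in fact slightly more careful than the paper in spelling out why $H$ is positive-dimensional and why the coset can be based at the real point $g(0)$.
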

\begin{proof}
  The Zariski closure of the image $g(\C)$ in $G$
  is an orbit of an algebraic subgroup  $H$ of $G$
  acting by multiplication (Proposition~\ref{LBO}).
  We choose a connected one-dimensional algebraic subgroup $T$ of $H$.
  Such a subgroup $T$ is isomorphic to $\C^*$ and 
  parametrized by a map
  $\zeta_0:\C^*\to G=(\C^*)^5$ given as
  \[
  \zeta_0(z)\stackrel{def}{=}
  \left( z^{m_1},\ldots, z^{m_5}\right).
  \]

  Define $\alpha=(\alpha_1,\ldots,\alpha_5)\stackrel{def}{=}g(0)$.
  The condition $g(z)=\overline{g(\bar z)}$ implies that
  $\alpha_i\in\R$ for all $i\in\{1,\ldots,5\}$.
  By our construction the $H$-orbit through $\alpha$ must be
  contained in $Z$. It follows that $\zeta(\C^*)\subset Z$
  for
  \[
  \zeta(z)=\zeta_0(z)\cdot\alpha=
  \left( \alpha_1z^{m_1},\ldots,\alpha_5z^{m_5}\right).
  \]
  \end{proof}
    
\begin{proposition}\label{3.1}
  Let $c_1,\ldots,c_4$ be a basis of the real vector space $\H$.
  Let   $M\in Mat(4\times 4,\R)$ be a positive definite
  symmetric real matrix.
  Let $Z$ denote the zero set of the function
  $\psi$ in $G=(\C^*)^5$ where
  \[
  \psi(v_1,\ldots, v_4; p)
  = p - w^tMw\quad\quad (v=(v_1, \cdots, v_4) \in \C^4, p\in\C)
  \]
  with
  \[ w= v - \begin{pmatrix}
  p + \QQ{c_1} \\ \vdots \\ p+\QQ{c_4}\\
  \end{pmatrix}.
\]

  Let $\alpha_i\in\R^*$ and $m_i\in\Z$ such that the image
  of the map $\zeta:\C^*\to G$ given as
  \[
  \zeta(z)\stackrel{def}{=}
  \left( \alpha_1z^{m_1},\ldots,\alpha_5z^{m_5}\right)
  \]
  is contained in $Z$ (i.e.~$\zeta(\C^*)\subset Z$).

  Then $m_i=0$ for all $i\in\{1,\ldots,5\}$, i.e.,
  $\zeta$ must be constant.
\end{proposition}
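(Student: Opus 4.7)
The plan is to substitute $\zeta(z)=(\alpha_1z^{m_1},\ldots,\alpha_5z^{m_5})$ into the defining equation $p=w^tMw$ of $Z$ and extract the vanishing of every $m_i$ from the resulting Laurent-polynomial identity in $z$. Positive definiteness of $M$, together with the strict positivity $\QQ{c_i}>0$ (which holds because the $c_i$ form a basis of $\H$, hence each $c_i\ne 0$), will be the main inputs that rule out any nontrivial top-order cancellation.

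As a preliminary step I would exploit the symmetry $z\mapsto z^{-1}$: the map $\zeta(1/z)=(\alpha_1z^{-m_1},\ldots,\alpha_5z^{-m_5})$ has the same image in $G$ as $\zeta$, so it too lies in $Z$, and it is of exactly the same form with exponents $-m_i$. Hence it suffices to prove $d:=\max_i m_i\le 0$, since applying the same statement to the reciprocal map then yields $\min_i m_i\ge 0$, and the two together force all $m_i=0$.

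Suppose for contradiction that $d>0$, and set $W_i(z):=\alpha_iz^{m_i}-\alpha_5z^{m_5}-\QQ{c_i}$ for $i=1,\ldots,4$; each $W_i$ is a real Laurent polynomial whose three exponents $m_i,m_5,0$ are all $\le d$, so $\deg W_i\le d$. Let $b_i\in\R$ denote the coefficient of $z^d$ in $W_i$. Then the coefficient of $z^{2d}$ in $\sum_{i,j}M_{ij}W_iW_j$ equals $b^tMb$, while $p=\alpha_5z^{m_5}$ has degree $m_5\le d<2d$ and contributes $0$ at that level. The identity $p=w^tMw$ therefore forces $b^tMb=0$; since $b\in\R^4$ and $M$ is positive definite, $b=0$.

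To finish I would unwind $b=0$ in cases. If $d>m_5$, some $m_k=d$ with $k\le 4$ gives $b_k=\alpha_k\ne 0$, a contradiction. If $d=m_5$ and $m_i<d$ for some $i\le 4$, then $b_i=-\alpha_5\ne 0$, again a contradiction. In the remaining case $m_1=\cdots=m_5=d$, the equation $b_i=\alpha_i-\alpha_5=0$ yields $\alpha_i=\alpha_5$ for all $i\le 4$, so each $W_i$ collapses to the constant $-\QQ{c_i}$; but then $\sum_{i,j}M_{ij}W_iW_j=\sum_{i,j}M_{ij}\QQ{c_i}\QQ{c_j}$ is a strictly positive constant, which cannot match the non-constant $\alpha_5z^d$. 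The main obstacle will be exactly this last degenerate subcase, where the top-order cancellation argument breaks down and one has to pass to the constant term, using crucially that the squared norms $\QQ{c_i}$ are strictly positive real numbers.
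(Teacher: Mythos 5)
Your proof is correct and follows essentially the same route as the paper's: both substitute $\zeta$ into $\psi$, examine an extremal Laurent coefficient of the resulting identity, and use positive definiteness of $M$ together with $\QQ{c_i}>0$ to force a contradiction. The only difference is organizational: you reduce via $z\mapsto 1/z$ to bounding $\max_i m_i$ and work with the single top coefficient of $z^{2d}$ (plus one degenerate subcase), whereas the paper normalizes $m_5\ge 0$, sorts $m_1\le\cdots\le m_4$, and treats the lowest-degree, constant, and highest-degree coefficients in three separate cases.
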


\begin{proof}
  We discuss the coefficients of the Laurent series
  $\sum_{k\in\Z}b_kz^k$ of the
  holomorphic function $z\mapsto (\psi\circ\zeta)(z)$
  defined on $\C^*$.
  Since $\psi\circ\zeta\equiv 0$ due to $\zeta(\C^*)\subset Z$,
  we know that $b_k=0$ for all $k\in\Z$.
  On the other hand, the Laurent coefficients $b_k$ depend on
  the matrix   $M$ and the coefficients $\alpha_i, m_i$.
  Using these facts we will see that we arrive at a contradiction
  if we assume that $\zeta$ is not constant.
  
  We start by observing that $\psi$ is a polynomial map of degree $2$
  whose purely quadratic term is given by
  \[
  \psi_2(v;p)= - (v-pd)^tM(v-pd)\quad\text{ with }d=(1,\ldots,1)^t
  \]
  
  We may replace $\zeta$ with its composition with the inverse
  element map $z\mapsto 1/z$ and thereby assume $m_5\ge 0$.
  By permuting variables we may also assume
  that
  \[
  m_1\le m_2\le m_3 \le m_4.
  \]
  Let us now assume that $\zeta$ is not constant, i.e.,
  let us assume that $(m_1,\ldots,m_5)\ne(0,\ldots,0)$.
  Our strategy is to show that the Laurent series of $\psi\circ
  \zeta$ can not vanish unless $(m_1,\ldots,m_5)=(0,\ldots,0)$.

  {\em Case 1.} We assume $m_1<0$.

  Fix $k$ such that
  $m_i=m_1$ for $1\le i\le k$ and $m_i>m_1$ for $k<i\le 4$.
  We consider the Laurent coefficient
  of degree $2m_1$. Note that $\zeta$ has no homogeneous component of
  degree less that $m_1$.
  Recall that $\psi$ is a quadratic polynomial.
  It follows that $\psi\circ\zeta$ has no homogeneous component of degree
  less that $2m_1$ and that the homogeneous component of degree $2m_1$
  equals $(\psi_2\circ\zeta)_{2m_1}$ where $\psi_2$ is the purely
  quadratic part of $\psi$ and $(\psi_2 \circ\zeta)_{2m_1}$
  is the homogeneous component of
  $\psi_2\circ\zeta$ of degree $2m_1$.
  Thus $(\psi_2\circ\zeta)_{2m_1}=b_{2m_1}z^{2m_1}$.

  By the definition of $\psi$ and $\zeta,$ it follows that
  $b_{2m_1}=-u^tMu$ with
  \[
  u=(\alpha_1,\ldots,\alpha_k,0,\ldots,0).
  \]
  But $M$ is positive definite and the $\alpha_i$ are all real
  and non-zero.
  Hence $u^tMu>0$, contradicting $\psi\circ\zeta\equiv 0$.

  {\em Case 2.} We assume $m_5>0$ and $m_1\ge 0$.
  
  Fix $k\in\{1,\ldots, 4\}$ such that $m_i=0$ iff $i\le k$.
  Here we investigate
  the constant term of the Laurent series of $\psi\circ\zeta$, i.e.,
  its degree-0-coefficient.
  
  This is $b_0=-u^tMu$ with
  \[
  u=\left(\alpha_1+\QQ{c_1},\ldots,\alpha_k+\QQ{c_k},\QQ{c_{k+1}},\ldots,
  \QQ{c_4}\right).
  \]
  We employ again the facts that $M$ is positive definite and $u$ is real.
  Hence $u^tMu=0$ requires that $u$ is the zero vector.
  Because $\QQ{c_i}>0$, it follows that $k=4$. Thus $m_i=0$ for
  all $i<5$. But now it follows that the degree $2m_5$-term
  is
  $-v^tMv$ with
  \[
  v=(\alpha_5,\ldots,\alpha_5)
  \]
  which yields a contradiction.

  {\em Case 3.} We assume $m_5=0$ and $m_1\ge 0$.

  Then
  $m_4=\max\{m_1,\ldots,m_5\}$ and we discuss the term of
  degree $2m_4$. Let $k$ be such that $m_i=m_4$ iff $4\ge i\ge k$.
  Then the degree $2m_4$-coefficient of the Laurent series
  equals
  $-u^tMu$ with
  \[
  u=(0,\ldots,\alpha_k,\ldots,\alpha_4)
  \]
  which can not be zero by the same arguments as before.

  Thus we have checked by contradiction that $(m_1,\ldots,m_5)$
  can not be different from $(0,\ldots,0)$.
\end{proof}

\begin{corollary}\label{cor-stab}
  Under the assumptions of Proposition~\ref{3.1}, let $X$ be an algebraic subvariety of $Z$
  such that $X\cap (\R^*)^5$ is not empty.

  Then the stabilizer group $Stab(X)=\{g\in G:g\cdot X=X\}$
  is discrete.
\end{corollary}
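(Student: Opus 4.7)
The plan is to derive a contradiction from Proposition~\ref{3.1}: if $Stab(X)$ failed to be discrete, I would produce exactly the kind of one-parameter family inside $Z$ that Proposition~\ref{3.1} forbids, using the real point of $X$ to seed it.

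First I would assume, for contradiction, that $Stab(X)$ is not discrete. Since $X$ is an algebraic subvariety of the torus $G=(\C^*)^5$, the stabilizer $Stab(X)$ is a closed algebraic subgroup of $G$, and its identity component $Stab(X)^0$ is then a connected algebraic subgroup of $G$ of positive dimension. Every connected algebraic subgroup of a torus is itself a subtorus, so $Stab(X)^0$ contains a one-dimensional connected algebraic subgroup $T\simeq\C^*$. Such a $T$ is the image of a homomorphism
\[
\zeta_0:\C^*\to G,\qquad z\mapsto (z^{m_1},\ldots,z^{m_5}),
\]
with $(m_1,\ldots,m_5)\in\Z^5\setminus\{(0,\ldots,0)\}$.

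Next I would pick a point $\alpha=(\alpha_1,\ldots,\alpha_5)\in X\cap(\R^*)^5$, which exists by hypothesis. Since $T\subset Stab(X)$ acts on $G$ by coordinatewise multiplication and preserves $X$, the entire orbit $T\cdot\alpha$ lies in $X$, hence in $Z$. Writing this orbit as
\[
\zeta(z)=\zeta_0(z)\cdot\alpha=(\alpha_1 z^{m_1},\ldots,\alpha_5 z^{m_5}),
\]
we obtain a map $\zeta:\C^*\to G$ with $\zeta(\C^*)\subset Z$, with $\alpha_i\in\R^*$ and with $(m_1,\ldots,m_5)\ne(0,\ldots,0)$. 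But Proposition~\ref{3.1} asserts that under exactly these hypotheses all the $m_i$ must vanish, a contradiction. Hence $Stab(X)$ must be discrete.

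The real work is already inside Proposition~\ref{3.1}; here the only subtlety is checking that a non-discrete $Stab(X)$ must contain a one-parameter subgroup of the precise monomial form $(z^{m_1},\ldots,z^{m_5})$. This is standard structure theory for subtori of $(\C^*)^5$, so I do not expect any genuine obstacle. The role of the hypothesis $X\cap(\R^*)^5\ne\emptyset$ is equally clean: it provides the real base point $\alpha$ needed to translate $T$ into the form required by Proposition~\ref{3.1}.
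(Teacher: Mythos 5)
Your proposal is correct and follows essentially the same route as the paper: assume $Stab(X)$ is not discrete, extract a one-parameter monomial subgroup $\{(z^{m_1},\ldots,z^{m_5})\}$ from it, translate by a real point $\alpha\in X\cap(\R^*)^5$ to get $\zeta(\C^*)\subset Z$, and contradict Proposition~\ref{3.1}. The only difference is that you spell out the structure theory of subtori in slightly more detail than the paper does.
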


\begin{proof}
  If $Stab(X)$ is not discrete, it contains an algebraic subgroup $H$
  isomorphic to $\C^*$, i.e., given as
  \[
  H=\{(z^{m_1},\ldots,z^{m_5}):z\in\C^*\}
  \]
  with $(m_1,\ldots,m_5)
  \in\Z^5\setminus\{(0,\ldots,0)\}$.
  
  Since $X\cap (\R^*)^5$ is non-empty, there are $\alpha_i\in\R^*$
  with $(\alpha_1,\ldots,\alpha_5)\in X$.
  Then
  \[
  (\alpha_1z^{m_1},\ldots,\alpha_5z^{m_5})\in X,\,\, \forall z\in\C^*
  \]
  contradicting the preceding proposition.
\end{proof}

\begin{remark}
  The assumption that $X$ contains a real point is crucial.
  E.g., for $M=I_4$ consider
  \[
  X=\{(1,1,z,iz;2):z\in\C^*\}
  \]
  Then $X\cap (\R^*)^5$ is empty and $Stab(X)$ is one-dimensional.
\end{remark}

\begin{theorem}\label{five}
  Let $c_1,\ldots,c_5\in\H$ be given such that there is no proper
  real affine $3$-subspace of $\H$ containing all $c_i$.

  Then every slice regular function $f:\H\to\H$ with
  $f(\H)\subset\H\setminus\{c_1,\ldots,c_5\}$ is constant.
\end{theorem}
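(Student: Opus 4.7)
The plan is to translate by $-c_5$ so that $c_5=0$, encode the five avoidance hypotheses into a single holomorphic map $\Phi:\C\to(\C^*)^5$ whose image lies in the quadric variety $Z$ of Proposition~\ref{3.1}, and then combine Proposition~\ref{NBO} with Proposition~\ref{3.1} to obtain a contradiction unless $\Phi$ is constant. The translation is harmless and makes the general position hypothesis equivalent to the statement that $c_1,\ldots,c_4$ form a real basis of $\H\simeq\R^4$.

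Let $F$ denote the stem function of $f$ and set
\[
p(z)=\QQ{F(z)},\qquad v_i(z)=\left<F(z)-c_i,F(z)-c_i\right>\quad(i=1,\ldots,4).
\]
By Proposition~\ref{2.2} and the consequence noted just after its statement, the hypothesis that $f$ avoids $c_1,\ldots,c_4$ forces $v_1,\ldots,v_4$ to be nonvanishing on $\C$, while the hypothesis that $f$ avoids $c_5=0$ forces $p$ to be nonvanishing. Thus $\Phi=(v_1,\ldots,v_4;p)$ is a holomorphic map $\C\to(\C^*)^5$, and the stem-function identity $\overline{F(\bar z)}=F(z)$ combined with $c_i\in\H$ yields the reality condition $\Phi(z)=\overline{\Phi(\bar z)}$. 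The key algebraic point is to show $\Phi(\C)\subset Z$ for an appropriately chosen positive definite $M$: expanding $F=\sum_i\mu_i c_i$ with $\mu_i\in\C$ and letting $G$ be the Gram matrix of $c_1,\ldots,c_4$ (positive definite since these form a real basis), one gets $p=\left<F,c\right>^tG^{-1}\left<F,c\right>$ while the definition of $v_i$ gives $v_i-p-\left<c_i,c_i\right>=-2\left<F,c_i\right>$. Choosing $M=\tfrac{1}{4}G^{-1}$ then makes the polynomial $\psi$ of Proposition~\ref{3.1} vanish identically on $\Phi(\C)$.

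If $f$ were non-constant, then $\Phi$ would also be non-constant, since constancy of $p$ and of $v_1,\ldots,v_4$ forces each $\left<F,c_i\right>$ to be constant, hence $F$ constant (because $c_1,\ldots,c_4$ spans $\Hc$ over $\C$ and $\left<\ ,\ \right>$ is non-degenerate). Proposition~\ref{NBO} applied to $\Phi$ then produces $\alpha_i\in\R^*$ and $(m_1,\ldots,m_5)\in\Z^5\setminus\{(0,\ldots,0)\}$ with $(\alpha_1z^{m_1},\ldots,\alpha_5z^{m_5})\in Z$ for all $z\in\C^*$, directly contradicting Proposition~\ref{3.1}. Hence $f$ must be constant.

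The step I expect to be the main obstacle is the bookkeeping that verifies $\Phi(\C)\subset Z$: one must pick the right translation, identify $c_1,\ldots,c_4$ as a real basis, and scale $M$ by exactly $\tfrac{1}{4}$ so that the identity $p=\left<F,c\right>^tG^{-1}\left<F,c\right>$ matches the defining equation $\psi=0$ on the nose. Once this algebraic fit is arranged, the logarithmic Bloch--Ochiai machinery packaged in Proposition~\ref{NBO} and the explicit computation in Proposition~\ref{3.1} close the argument with no further effort.
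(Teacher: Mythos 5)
Your proposal is correct and follows essentially the same route as the paper: translate so that $c_5=0$, form the map $\Phi=(\left<F-c_1,F-c_1\right>,\ldots,\left<F-c_4,F-c_4\right>;\QQ{F})$ into $Z\cap(\C^*)^5$, and combine Proposition~\ref{NBO} with Proposition~\ref{3.1}. The only (cosmetic) differences are that you verify $\Phi(\C)\subset Z$ via the Gram matrix with $M=\tfrac14 G^{-1}$ — which matches the normalization of $\psi$ in Proposition~\ref{3.1} and agrees with the paper's $M=B^tB=G^{-1}$ once the factor $-\tfrac12$ in the paper's $u$ is accounted for — and that you deduce non-constancy of $\Phi$ directly from non-degeneracy of the bilinear form rather than from injectivity of the auxiliary map $\phi$.
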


\begin{proof}
  Without loss of generality we may assume that $c_5=0$.
  By abuse of language we identify $c_i\in\H$ with $c_i\tensor 1\in\Hc$.
  Let $\left<\ ,\ \right>$ denote the complex bilinear form on $\Hc$
  induced by the euclidean scalar product on $\H\simeq\R^4$, i.e.,
  $\left<z,w\right>=\sum_i z_iw_i$.
  
  We define a holomorphic map $\phi:\Hc=\C^4\to\C^5$ by
  \begin{equation}\label{def-psi}
  \phi:
  \begin{pmatrix} z_1\\ \vdots \\ z_4 \\
  \end{pmatrix}
  \mapsto
  \begin{pmatrix}
\QQ z-2\left<z,c_1\right>+\QQ{c_1}\\
\vdots\\
\QQ z-2\left<z,c_4\right>+\QQ{c_4}\\
\QQ z\\
  \end{pmatrix}_{.}
  \end{equation}
  Observe that $\phi(z)=\overline{\phi(\bar z)}$.
  
  By assumption the vectors
  $c_1,\ldots,c_4$ form a real vector space basis for $\H$.
  It follows that there exists an invertible real $4\times 4$-matrix
  $B$ such that
  \begin{equation}
  \label{def-B}
  \begin{pmatrix} \left< z,c_1\right>\\
  \vdots\\
  \left< z,c_4\right>\\
  \end{pmatrix} = B^{-1}\cdot  z, \,\,  \forall z\in\R^4\simeq\H.
  \end{equation}
  Let $M=B^tB$. Then $M$ is a positive definite symmetric real matrix
  $M$ such that
  for every $z\in\C^4$ we have
  \[
  \QQ z =v^t\cdot M\cdot v
  \]
  if
  \[
  v= \begin{pmatrix} \left< z,c_1\right>\\
  \vdots\\
  \left< z,c_4\right>\\
  \end{pmatrix}.
  \]

  We observe that
  \[
  \phi_i(z)=\QQ{z}-2\left<z,c_i\right>+\QQ{c_i}
  \]
  for $z=(z_1,\ldots,z_4)$ and $i\in\{1,2,3,4\}$
  implies that
  \[
  \left<z,c_i\right> = -\frac 12 \left ( \phi_i(z)-\QQ z -\QQ{c_i}
  \right).
  \]
  Combined with $\phi_5(z)=\QQ{z}$ we obtain that
  \[
  \phi_5(z) = v^tMv
  \]
  for
  \[
  v_i=- \frac 1 2 \left( \phi_i(z)-\QQ{z} -\QQ{c_i}\right).
  \]

  On $\C^5$ we define an algebraic subvariety $Z$ as the zero set 
  of the function
  \begin{align*}
&  \psi(w_1,\ldots,w_4;p)
  = p - u^tM u, \quad \text{with}\\
  &  u= - \frac 12\left(w_1-p-\QQ{c_1},\ldots, w_4-p-\QQ{c_4} \right)^t.
  \\
  \end{align*}

  Due to the definition of $\psi$ it is clear that
  $\psi(w;p)=0$ if $(w,p)=\phi(z)$ for some $z\in\C^4$.

  Therefore $\phi(\C^4)\subset Z$.
  
  We claim that $\phi:\C^4\to Z$ is biholomorphic. Indeed,
  consider
  \[
  \mu: \begin{pmatrix} v_1\\ \vdots\\  v_4\\ v_5 \\
  \end{pmatrix}
  \mapsto
  B\cdot
  \begin{pmatrix}
    -\frac 12\left(v_1-\left<c_1,c_1\right>-v_5)\right)\\
    \vdots\\
    -\frac 12\left(v_4-\left<c_4,c_4\right>-v_5)\right)\\
  \end{pmatrix}
  \]
  with $B$ defined as in \eqref{def-B}.
  Due to the definitions of $\psi$ and $B$ (\eqref{def-psi}
  resp.~\eqref{def-B}) this map $\mu:Z\to\C^4$ is an inverse for
  $\psi:\C^4\to Z$. Thus $\C^4$ and $B$ are biholomorphic and
  even isomorphic as algebraic varieties.
  
  Now let $f$ be a non-constant slice regular function avoiding the values
  $c_1$,\ldots, $c_4$, $c_5=0$ and let $F:\C\to\Hc\simeq\C^4$ be its
  stem function.
  Since $\phi(\C^4)\subset Z=\{\psi=0\}$, we obtain a holomorphic
  map $g=\phi\circ F:\C\to Z$. By construction $g(z)=\overline{g(\bar z)}$
  for all $z\in\C$. Furthermore $g$ is non-constant, because $F$ is
  non-constant and $\phi$ is injective.

  Because $f:\H\to\H$ is assumed to avoid $c_i$ for every $i$, we
  know (thanks to Proposition~\ref{2.2}) that $\phi_i(F(z))\ne 0$
  for all $z\in\C$ and all $i$, i.e.,
  $\phi(F(\C))\subset Z\cap (\C^*)^5$.
  
  Thus we may apply
  Proposition~\ref{NBO} and conclude that
  there exist $\alpha_1\ldots,\alpha_5\in\R^*$
  and $(m_1,\ldots,m_5)\in\Z^5\setminus\{(0,\ldots,0)\}$
  such that $\zeta(\C^*)\subset Z$
  for 
  \[
  \zeta(z)\stackrel{def}{=}
  \left( \alpha_1z^{m_1},\ldots,\alpha_5z^{m_5}\right).
  \]
  But such a holomorphic map can not exist due to
  Proposition~\ref{3.1}
  Contradiction! Thus there is no non-constant slice regular
  function $f:\H\to\H$ avoiding all the $c_i$.
  \end{proof}
\begin{remark}
  If $f \colon \mathbb{H} \to \mathbb{H}$ is non-constant and
  slice preserving (i.e. it preserves each slice), then it can avoid only real points and at most one. \\
  If $f$ is non-constant and one-slice preserving (i.e. it preserves a unique slice), 
then it can avoid only one point on the slice which is preserved.
\end{remark}

\section{Big Picard}

In complex analysis, the ``Big Picard theorem'' states the following:
If $f$ is a holomorphic function on
$\Delta^*=\{z\in\C: 0<|z|<1\}$ with an essential singularity at $0$,
then $f$ assumes every value in $\P_1$ infinitely often with at most two
exceptions.

\begin{proposition}\label{pic-ext}
  Let $Z$ be defined as in Proposition~\ref{3.1}.
  Let $\eta$ be a holomorphic map from $\Delta^*$
  to $Z\subset (\C^*)^5\subset(\P_1)^5$
  with $\eta(\bar z)=\overline{\eta(z)}$ for all $z$.

  Then $\eta$ extends through $0$ to a holomorphic map
  to $(\P_1)^5$, i.e., the
  isolated singularity of $\eta$ at $0$ is not essential.
\end{proposition}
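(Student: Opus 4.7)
The plan is to reduce this to Noguchi's extension result, Proposition~\ref{NogD}, applied to $\eta:\Delta^*\to G=(\C^*)^5\subset \bar G=(\P_1)^5$. That proposition produces a holomorphic extension of $\eta$ across $0$ as soon as the Zariski closure $X$ of $\eta(\Delta^*)$ has discrete stabilizer in $G$, so the heart of the argument will be to verify this discreteness.

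First, since $Z$ is a closed algebraic subvariety of $G$ and $\eta(\Delta^*)\subset Z$, one obtains $X\subset Z$ automatically. Next, I would exploit the reality condition $\eta(\bar z)=\overline{\eta(z)}$: for any real $t\in(-1,0)\cup(0,1)\subset\Delta^*$ it forces $\eta(t)=\overline{\eta(t)}$, hence $\eta(t)\in\R^5\cap(\C^*)^5=(\R^*)^5$. Such points already lie in $\eta(\Delta^*)\subset X$, so $X\cap(\R^*)^5\ne\emptyset$.

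These two properties are precisely the hypotheses of Corollary~\ref{cor-stab}, which therefore guarantees that $Stab(X)$ is discrete. Proposition~\ref{NogD} then delivers the desired extension of $\eta$ to a holomorphic map $\Delta\to(\P_1)^5$, so the isolated singularity of $\eta$ at $0$ is not essential.

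I do not anticipate any serious obstacle: the entire algebraic heavy lifting has already been done in Proposition~\ref{3.1} and its corollary, and the reality argument is just a one-line observation. The only bookkeeping point is that Noguchi's formulation of $Stab(X)$ (as $\{g\in G:g\cdot x\in X\ \forall x\in X\}$) should be reconciled with the formulation in Corollary~\ref{cor-stab} (as $\{g\in G:g\cdot X=X\}$); but restricted to any algebraic subgroup $H$ of $G$ the conditions $H\cdot X\subset X$ and $H\cdot X=X$ are equivalent, by applying the same inclusion to $H^{-1}$, so the two notions of discreteness coincide.
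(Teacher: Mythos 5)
Your proposal is correct and follows exactly the paper's own argument: pass to the Zariski closure $X$ of $\eta(\Delta^*)$, use the reality condition to see that $X$ meets $(\R^*)^5$, invoke Corollary~\ref{cor-stab} for discreteness of $Stab(X)$, and conclude via Proposition~\ref{NogD}. Your extra remark reconciling the two formulations of $Stab(X)$ is a harmless (and reasonable) addition that the paper leaves implicit.
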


\begin{proof}
  Let $X$ denote Zariski closure of $\eta(\Delta^*)$
  in $Z$. Note that $\eta(z)\in(\R^*)^5$ for $z\in\R\cap\Delta^*$.
  Thus $X$ has non-trivial intersection with $(\R^*)^5$.
  It follows that $Stab(X)$ is discrete (see Corollary~\ref{cor-stab} of Section 3).
  This implies that $\eta$ extends to a holomorphic map defined on
  $\Delta$ (Proposition~\ref{NogD}).
  \end{proof}

\begin{theorem}[Quaternionic Big Picard]
  Let $\mathbb{B}$ denote the open unit ball in $\H$ and let
  $f:\mathbb{B}\setminus\{0\}\to \H$ be a slice regular function
  with stem function $F:\Delta^*\to\Hc$.
  Assume that $F$ has an essential singularity at $0$
  (i.e.~at least one of the components of $F$ has an essential
  singularity).

  Let $S$ denote the set of all $v\in\H$ for which
  the level set $f^{-1}(v)=\{q\in\H:f(q)=v\}$ is finite.

  Then $S$ is contained in an affine real hyperplane in $\H$.
\end{theorem}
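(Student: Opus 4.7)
The plan is to argue by contradiction, combining the algebraic set-up of the proof of Theorem~\ref{five} with the extension result Proposition~\ref{pic-ext}. Suppose $S$ is not contained in any real affine hyperplane of $\H\simeq\R^4$. Then one can choose five values $c_1,\dots,c_5\in S$ in general position; after an affine translation I may assume $c_5=0$ and that $c_1,\dots,c_4$ form an $\R$-basis of $\H$. Copy verbatim the construction in the proof of Theorem~\ref{five}: the holomorphic map $\phi\colon\Hc=\C^4\to\C^5$ from \eqref{def-psi} and the quadric $Z\subset\C^5$ cut out by $\psi$.

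The central step is to look at the composition $\eta:=\phi\circ F\colon\Delta^*\to\C^5$ and verify that, after deleting finitely many points of $\Delta^*$, it lands in $Z\cap(\C^*)^5$. The $i$-th component of $\eta$ is the function $Q_{c_i}(z)=\langle F(z)-c_i,F(z)-c_i\rangle$, whose zero set by Proposition~\ref{2.2} corresponds to the level set $f^{-1}(c_i)$. Since $c_i\in S$ by hypothesis this level set is finite, and since each zero of $Q_{c_i}$ in $\Delta^*$ produces at least one preimage of $c_i$ in $\mathbb{B}\setminus\{0\}$, the function $Q_{c_i}$ has only finitely many zeros on $\Delta^*$. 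Let $E\subset\Delta^*$ denote the (finite) union of the five zero sets and shrink to a smaller annulus $\{0<|z|<r\}$ around $0$ disjoint from $E$. On this annulus (rescaled to $\Delta^*$) the map $\eta$ takes values in $Z\cap(\C^*)^5$ and satisfies the reality condition $\eta(\bar z)=\overline{\eta(z)}$, because $\phi$ is defined by polynomials with real coefficients and $F$ is a stem function. Proposition~\ref{pic-ext} then applies and extends $\eta$ to a holomorphic map $\Delta\to(\P_1)^5$.

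Consequently every component $\eta_i$ is meromorphic at $0$. From the explicit form of $\phi$ one reads off $\eta_5(z)=\langle F(z),F(z)\rangle$ and $\eta_i(z)-\eta_5(z)=-2\langle F(z),c_i\rangle+\langle c_i,c_i\rangle$ for $i=1,\dots,4$, so the four $\C$-linear functionals $z\mapsto\langle F(z),c_i\rangle$ are all meromorphic at $0$. Because $c_1,\dots,c_4$ is an $\R$-basis of $\H$, the associated $\C$-linear forms $v\mapsto\langle v,c_i\rangle$ span the complex dual of $\Hc$, so $F$ itself is meromorphic at $0$, contradicting the assumption that $F$ has an essential singularity there.

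The step I expect to be the main obstacle is the finiteness claim for the zero sets of the $Q_{c_i}$, that is the passage from the set-theoretic statement ``$f^{-1}(c_i)$ is finite'' to the analytic statement ``$Q_{c_i}$ has finitely many zeros on $\Delta^*$''; it relies on the refined content of Proposition~\ref{2.2} that every isolated zero of $Q_{c}$ comes from an actual preimage of $c$ under $f$. The finite exceptional set $E$ itself is harmless, since only the behaviour of $F$ at the origin matters and we may freely shrink the punctured disk to avoid $E$.
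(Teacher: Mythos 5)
Your proposal is correct and follows essentially the same route as the paper: assume five values of $S$ in general position, feed $\phi\circ F$ into Proposition~\ref{pic-ext} via the construction of Theorem~\ref{five}, and conclude that $F$ is meromorphic at $0$, contradicting the essential singularity. The only cosmetic differences are that the paper obtains the punctured ball on which $f$ avoids all five values by taking $r=\min\{|q|:q\in\cup_m f^{-1}(c_m),\,q\ne 0\}$ rather than by counting zeros of the $Q_{c_i}$, and it deduces meromorphy of $F$ by citing the polynomial inverse $\mu=\phi^{-1}$ instead of your (equivalent) linear-algebra extraction of the functionals $\left<F,c_i\right>$.
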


\begin{proof}
  Assume the contrary. Then there are five values $c_0,\ldots,c_4$
  for which the level set is finite such that these five values
  generate $\H$ as an affine real space.
  Since  $\cup^4_{m=0}f^{-1}(c_m)$ is finite,
  we may define
  \[
  r=\min\{|q|:q\in\cup^4_{m=0}f^{-1}(c_m), q\ne 0 \},
  \quad\quad \B_r=\{q\in\H:|q|<r\}
  \]
  Now $f|_{\B_r\setminus\{0\} }$ avoids $c_0,\ldots, c_4$.
  Hence $\phi(F(z))\in(\C^*)^5\cap Z$
  for all $z\in\C$, $|z|<r$ (with $\phi$ and $Z$ defined as in
  Theorem~\ref{five}).
  Due to Proposition~\ref{pic-ext} the holomorphic map
  $\phi\circ F:\{z\in\C: 0<|z|<r\}\to Z$ extends
  to a holomorphic map with values in $(\P_1)^5$.
  But $\phi:\Hc \to Z$ is a biholomorphic map,
  whose inverse map $\phi^{-1}=\mu$ is polynomial (see the proof of Theorem 3.5).
 It follows immediately that
  $\phi^{-1}\circ (\phi\circ F)=F$ extends to a holomorphic map from
  $\Delta$ to $(\P_1)^4$.
  This yields a contradiction to our assumptions.
  \end{proof}

Since over the complex field, Picard's theorems are the global version of the local Landau's Theorem, we point out that a quaternionic Landau's Theorem for slice regular functions already exists in the literature, see \cite{bs17}.

\begin{proposition}
  For every non-constant slice regular function $f:\H\to\H$
  the image is dense
  in $\H$.
\end{proposition}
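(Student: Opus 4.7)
The plan is to deduce this density statement as an immediate corollary of Theorem~\ref{five}, via a standard ``generic position in an open set'' argument.

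First I would argue by contradiction: assume $f:\H\to\H$ is non-constant, yet its image $f(\H)$ is not dense in $\H$. Then the complement $\H\setminus\overline{f(\H)}$ is a non-empty open subset $U$ of $\H\simeq\R^4$, and every element of $U$ is a value that $f$ avoids.

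Next I would show that any non-empty open subset $U$ of $\R^4$ contains five points in general position, i.e., five points not contained in any $3$-dimensional real affine subspace. This is straightforward: choose any $c_1\in U$; then inductively choose $c_{k+1}\in U$ so that $c_{k+1}-c_1$ does not lie in the real linear span of $c_2-c_1,\ldots,c_k-c_1$. Each avoidance condition excludes only a proper affine subspace of $\R^4$, whose intersection with $U$ has empty interior, so such a choice is always possible inside $U$. Applied for $k=1,\ldots,4$, this produces $c_1,\ldots,c_5\in U$ such that $c_2-c_1,\ldots,c_5-c_1$ are linearly independent, which is exactly the condition that $c_1,\ldots,c_5$ are not contained in any $3$-dimensional real affine subspace of $\H$.

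Finally, since all of $c_1,\ldots,c_5$ lie in $U\subset\H\setminus f(\H)$, the slice regular function $f$ avoids these five values in general position. By Theorem~\ref{five}, $f$ must then be constant, contradicting our assumption. The only non-routine ingredient is Theorem~\ref{five} itself; the rest is an elementary genericity argument in $\R^4$, so there is no serious obstacle beyond invoking the already-established five-value theorem.
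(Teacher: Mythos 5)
Your argument is correct and is essentially identical to the paper's own proof: the paper also assumes the image is not dense, notes that the complement then contains a non-empty open set, picks five points in general position inside it, and invokes Theorem~\ref{five} for the contradiction. Your explicit inductive construction of the five points merely fills in what the paper dismisses as ``trivially possible.''
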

\begin{proof}
  If the image is not dense, its complement contains a
  non-empty open set. But it
  is trivially possible to choose five points in general position inside
  any given non-empty open set, leading to a contradiction with
  Theorem~\ref{five}.
\end{proof}

In particular, a bounded slice regular function $f:\H\to\H$ must be
constant, a fact which was first proved in \cite{GS}, Theorem~3.7.

\section{The example of a function avoiding $\C_I$}

Here we provide an example of a slice regular function avoiding
infinitely many values.

\begin{proposition}\label{negex}
  Let $f:\H\to\H$ be the slice regular function induced
  by the stem function
\[
F(z)=J\tensor \sin(z)+K\tensor \cos(z).
\]

Then
\[
f(\H)=\H\setminus\C_I=\{c_1+c_2I+c_3J+c_4K; c_i\in\R, (c_3,c_4)\ne (0,0)\}.
\]
\end{proposition}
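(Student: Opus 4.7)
The plan is to use the criterion stated just after Proposition~\ref{2.2}: a value $c\in\H$ is attained by $f$ on $\H$ if and only if the entire function
\[
Q_c(z) \;=\; \QQ{F(z)-c\tensor 1} \;=\; \QQ{F(z)} - 2\left<F(z),c\right> + \QQ{c}
\]
has a zero in $\C$. Thus I must prove two things: $Q_c$ has no zero when $c\in\C_I$, and $Q_c$ has a zero when $c\in\H\setminus\C_I$.

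First I compute $Q_c$ explicitly. In the basis $(1,I,J,K)\tensor\C$ of $\Hc$ the stem function $F(z)=J\tensor\sin z+K\tensor\cos z$ has coordinates $(0,0,\sin z,\cos z)$, so the Pythagorean identity gives
\[
\QQ{F(z)} \;=\; \sin^2 z+\cos^2 z \;=\; 1.
\]
For $c=c_1+c_2I+c_3J+c_4K$ with $c_i\in\R$, the bilinear pairing yields $\left<F(z),c\right>=c_3\sin z+c_4\cos z$ and $\QQ{c}=c_1^2+c_2^2+c_3^2+c_4^2$. Therefore
\[
Q_c(z) \;=\; 1+c_1^2+c_2^2+c_3^2+c_4^2 \;-\; 2\bigl(c_3\sin z+c_4\cos z\bigr).
\]

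If $c\in\C_I$ then $c_3=c_4=0$, so $Q_c\equiv 1+c_1^2+c_2^2\ge 1>0$; thus $c$ is avoided. If instead $(c_3,c_4)\ne(0,0)$, I must solve $c_3\sin z+c_4\cos z=T$ with $T=(1+|c|^2)/2$. Substituting $w=e^{iz}$ converts $\sin z$ and $\cos z$ to Laurent monomials in $w$, and clearing $w$ turns the equation into the quadratic
\[
(c_3+ic_4)\,w^2 \;-\; 2iT\,w \;-\; (c_3-ic_4) \;=\; 0.
\]
Because $c_3,c_4$ are real and not both zero, the leading coefficient $c_3+ic_4$ and the constant term $c_3-ic_4$ are nonzero complex conjugates. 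Hence the quadratic has two nonzero roots $w\in\C^*$; for either one, any branch of $z=-i\log w$ supplies a zero of $Q_c$, and Proposition~\ref{2.2} then produces an imaginary unit $H\in\S$ such that $f(x+yH)=c$, where $z=x+yi$.

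No serious obstacle is expected: the two crucial simplifications are the collapse $\QQ{F(z)}\equiv 1$ forced by $\sin^2+\cos^2=1$, which reduces $Q_c$ to a first-order trigonometric expression, and the realness of $c_3,c_4$, which prevents the quadratic in $w=e^{iz}$ from degenerating and guarantees nonzero roots.
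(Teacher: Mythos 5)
Your proposal is correct. The first half (computation of $Q_c$, the collapse $\QQ{F(z)}=\sin^2 z+\cos^2 z=1$, and the conclusion that every $c\in\C_I$ is avoided) coincides exactly with the paper's argument. For the second half --- showing that every $c$ with $(c_3,c_4)\ne(0,0)$ is attained --- you take a genuinely different route. The paper works in real coordinates: it expands $\sin(x+iy)$ and $\cos(x+iy)$ via hyperbolic functions, first chooses $x$ so that $c_3\cos x-c_4\sin x=0$ (killing the imaginary part of $Q_c$), normalizes the sign so that $c_3\sin x+c_4\cos x>0$, and then uses the Cauchy--Schwarz inequality to show that the required value $t=\frac{1+\|c\|^2}{2(c_3\sin x+c_4\cos x)}$ satisfies $t\ge 1$, so that $\cosh(y)=t$ is solvable. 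You instead substitute $w=e^{iz}$ and reduce $c_3\sin z+c_4\cos z=T$ to the quadratic $(c_3+ic_4)w^2-2iTw-(c_3-ic_4)=0$, whose leading coefficient and constant term are nonzero precisely because $(c_3,c_4)\ne(0,0)$; the fundamental theorem of algebra then yields a root $w\in\C^*$ and hence $z=-i\log w$. Your argument is shorter and avoids the case analysis and the Cauchy--Schwarz estimate entirely, at the modest cost of passing through a branch of the logarithm; the paper's argument is more elementary and constructive, producing an explicit real pair $(x,y)$. Both are complete proofs.
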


\begin{proof}
  We start with some preparations concerning complex trigonometric
  functions.

  We recall that
  $\sin(iy)=i\sinh(y)$ and $\cos(iy)=\cosh(y)$ for all $y\in\R$.

For $z=x+iy$ ($x,y\in\R$) we obtain
\begin{align*}
\sin(z)=\sin(x+iy)&=\sin(x)\cos(iy)+\cos(x)\sin(iy)\\
&=\sin(x)\cosh(y)+i\cos(x)\sinh(y)
\end{align*}
and
\[
\cos(z)=\cos(x+iy)=\cos(x)\cosh(y) -i \sin(x)\sinh(y).
\]

Given $c=c_1+c_2I+c_3J+c_4K\in\H$, there exists a quaternionic number $q$ with
$f(q)=c$ iff there exists a complex number $z=x+iy$ with
\[
\QQ{F(z)-c\tensor 1}
=0.
\]

Now
\begin{align*}
&\QQ{F(z)-c\tensor 1} \\
=& \QQ{F(z)}-2\left<c\tensor 1,F(z)\right>+||c||^2\\
=& 1 -2\left( c_3\sin(z)+c_4\cos(z) \right) +||c||^2
\end{align*}
implying
\begin{equation}\label{eq-im}
\Im\left(\QQ{F(z)-c\tensor 1}\right)
=
-2\sinh(y)\left( c_3\cos(x) -c_4\sin(x) \right)
\end{equation}
and
\begin{multline}\label{eq-re}
\Re\left(\QQ{F(z)-c\tensor 1}\right)\\
= 
1 - 2 \cosh(y) \left( c_3\sin(x)+c_4\cos(x) \right) + ||c||^2.
\end{multline}

It follows that
\[
\Re\left(\QQ{F(z)-c\tensor 1}\right)=1+||c||^2\ge 1 >0
\]
if $c_3=c_4=0$. This proves that $f$ does not assume any
value in $\C_I$.

It remains to prove that all other values are assumed.

We claim: For every $c\in\H\simeq\R^4$ with $(c_3,c_4)\ne (0,0)$ there exist
$x,y\in\R$ such that $\QQ{F(x+yi)-c\tensor 1}=0$.

First we choose $x\in\R$ such that
\[
c_3\cos(x)-c_4\sin(x)=0
\]
Due to \eqref{eq-im} this guarantees that
\[
\Im(\QQ{F(x+yi)-c\tensor 1})=0.
\]

If $c_3\sin(x)+c_4\cos(x)<0$, we replace $x$ by $x+\pi$.
This ensures that
\[
c_3\sin(x)+c_4\cos(x)>0
\]

Define
\[
t= \frac{1+||c||^2}{2\left( c_3\sin x+c_4\cos x\right)}.
\]
We have to show that there exists a number $y\in\R$ with
$\cosh(y)=t$, because then it follows from \eqref{eq-im} and
\eqref{eq-re} that $\left<F(x+iy),F(x+iy)\right>=0$.

An application of the Cauchy Schwarz Inequality to the vectors
  $(c_3,c_4)$ and $(\sin(x),\cos(x))$ yields the
  inequality
  \[
  \left| c_3\sin(x)+c_4\cos(x)\right|\le \sqrt{c_3^2+c_4^2}
  \]

Using $c_3\sin(x)+c_4\cos(x)>0$ it follows that
\[
t= \frac{1+||c||^2}{2\left( c_3\sin x+c_4\cos x\right)}
\ge \frac{1+\left( c_3\sin x+c_4\cos x\right)^2}%
    {2\left( c_3\sin x+c_4\cos x\right)}\ge 1.
\]
Now $t\ge 1$ implies that there exists a real number $y$
with $\cosh(y)=t$.
This completes the proof.
\end{proof}

\section{Avoiding three points}

\begin{proposition}
  Let $c_1,c_2,c_3$ be three arbitrary quaternionic numbers.

  Then there exists a non-constant slice regular function
  $f(q)=\sum q^ka_k$ such that $f(\H)\subset\H\setminus
  \{c_1,c_2,c_3\}$.
\end{proposition}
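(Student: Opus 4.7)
My plan is to establish the stronger claim~(1) from the introduction: for every $2$-dimensional real affine subspace $P\subset\H$, construct a non-constant entire slice regular $f$ with $f(\H)=\H\setminus P$. The proposition then follows because any three quaternions $c_1,c_2,c_3$ lie in some such $P$ --- if they are affinely independent then $P$ is unique, and otherwise they lie on a line (or a single point), which can be thickened to any $2$-plane containing it.

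To construct $f$, I would write $P=p_0+W$ with $W$ a $2$-dimensional $\R$-linear subspace of $\H$, and pick an orthonormal basis $a,b$ of the orthogonal complement $W^\perp$ (also $2$-dimensional). Generalizing the construction of Proposition~\ref{negex} by replacing $J,K$ with $a,b$ and translating by $p_0$, I would set
\[
f(q)=\sin(q)\,a+\cos(q)\,b+p_0,
\]
which is entire slice regular with stem function $F(z)=a\tensor\sin z+b\tensor\cos z+p_0\tensor 1$. Right-multiplication by quaternionic constants and translation preserve slice regularity, and $\sin$ is non-constant, so $f$ is a non-constant entire slice regular function.

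To identify the image, I would apply Proposition~\ref{2.2} and compute, using $\left<a,a\right>=\left<b,b\right>=1$ and $\left<a,b\right>=0$, that
\[
\QQ{F(z)-v\tensor 1}=1+|p_0-v|^2+2\left<a,p_0-v\right>\sin z+2\left<b,p_0-v\right>\cos z.
\]
Setting $A=\left<a,p_0-v\right>$ and $B=\left<b,p_0-v\right>$ (both real, since $a,b,p_0,v\in\H$), the above expression vanishes for some $z\in\C$ iff $(A,B)\ne(0,0)$, by the same trigonometric analysis as in Proposition~\ref{negex}. Since $W^\perp=\R a\oplus \R b$, the condition $(A,B)\ne(0,0)$ is equivalent to $p_0-v\notin W$, i.e. $v\notin P$. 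Combining with Proposition~\ref{2.2}, this gives $f(\H)=\H\setminus P$, and in particular $f$ avoids $c_1,c_2,c_3$.

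The only step demanding actual work is the trigonometric solvability: showing that $1+|p_0-v|^2+2A\sin z+2B\cos z=0$ admits a complex root whenever $(A,B)\ne(0,0)$. This is the same argument already used in Proposition~\ref{negex}, relying on a Cauchy--Schwarz estimate combined with $\cosh y\ge 1$; it transfers verbatim to the present setting, so no genuinely new idea is required and the reduction to the $2$-plane case is clean.
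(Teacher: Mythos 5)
Your proof is correct, but it takes a genuinely different route from the paper. The paper keeps the single model function of Proposition~\ref{negex} (avoiding exactly $\C_I$) fixed and instead moves the three points: it forms $g(q)=\left(\sum_k q^k\phi(a_k)\right)\lambda+p$ for a ring automorphism $\phi$ of $\H$, $\lambda\in\H^*$, $p\in\H$, and chooses $p=c_1$, $\lambda=c_2-c_1$, and $\phi$ so that $\phi^{-1}\left((c_3-c_1)(c_2-c_1)^{-1}\right)\in\C_I$; the three given values then pull back into $\C_I$, which $f$ avoids. You instead move the function: you generalize the construction itself, replacing $J,K$ by an orthonormal basis $a,b$ of $W^\perp$ and translating by $p_0$, and you re-run the quadric computation of Proposition~\ref{2.2} directly. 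Your computation checks out: the cross term $2\left<a,b\right>\sin z\cos z$ vanishes by orthogonality, $\left<a,a\right>\sin^2z+\left<b,b\right>\cos^2z=1$, and the solvability argument goes through with $A^2+B^2\le|p_0-v|^2$ (Bessel, since $a,b$ are orthonormal) playing the role of $c_3^2+c_4^2\le\|c\|^2$ --- the one spot where the transfer is not literally verbatim, though it is equally immediate. What each approach buys: the paper's reduction avoids redoing any analysis but requires knowing that every $\C_H$ can be carried to $\C_I$ by a ring automorphism of $\H$ and that pre/post-composition with such affine maps preserves slice regularity; your version is more self-contained and delivers the sharper conclusion $f(\H)=\H\setminus P$ in one stroke (which the paper only extracts afterwards in a remark), at the cost of repeating the trigonometric computation in slightly greater generality.
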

\begin{proof}
We have seen that there exists a slice regular function
$f(q)=\sum_k q^ka_k$ with $f(\H)\subset\H\setminus\C_I$
(Proposition~\ref{negex}).

We modify this function in the following way: Let $\lambda\in\H^*$,
$p\in\H$ and let $\phi$ be a ring automorphism of $\H$.

Then we define a slice regular function $g$ by
\[
g(q)\stackrel{def}=
\left( \sum_k q^k\phi(a_k)\right) \lambda + p.
\]

For any $c\in\H$ we have
\begin{align*}
  c &= g(\phi(q))\\
  \iff c &= \phi(f(q))\lambda+p\\
  \iff \phi^{-1}(c) &= f(q)\phi^{-1}(\lambda)+\phi^{-1}(p)\\
  \iff f(q)&=\left( \phi^{-1}(c)-\phi^{-1}(p)\right) \phi^{-1}(1/\lambda).\\
\end{align*}

Let $c_1,c_2,c_3\in\H$ be three given distinct quaternionic numbers.
(Evidently it suffices to consider only the case of three
{\em distinct} numbers.)

We choose $p,\lambda,\phi$ such that:
\begin{enumerate}
\item
  $p=c_1$,
\item
  $\lambda=c_2-c_1$,
\item
  $\phi^{-1}\left( (c_3-c_1)(c_2-c_1)^{-1}\right)\in\C_I$.
\end{enumerate}

In order to verify that this is possible, let $H\in\H$ be an
imaginary unit (i.e.~$H^2=-1$)
such that
\[
 (c_3-c_1)(c_2-c_1)^{-1} \in\C_H=\R\oplus H\R.
\]
Let $\phi$ be an orientation preserving linear orthogonal
transformation of $\H$ fixing $\R$ pointwise and such that
$\phi(I)=H$.
Then $\phi$ is a ring automorphism of $\H$
satisfying $\text{(iii)}$.

It is easily verified that
\[
\left( \phi^{-1}(c_i)-\phi^{-1}(p)\right) \phi^{-1}(1/\lambda)\in\C_I
\]
for all three indices $i\in\{1,2,3\}$.
Since $f$ avoids values in $\C_I$, it follows that $g$ avoids the
three values $c_1,c_2,c_3$.
\end{proof}

\begin{remark}
  Since any $2$-dimensional real affine subspace $P$ of $H\simeq\R^4$ is
  spanned by three points, it follows form the above that there exists
  an entire slice regular function $f:\H\to\H$
  such that $f(\H)=\H\setminus P$.
\end{remark}

{\bf Open Problem: } \\
Is or isn't there a non-constant slice regular entire function of $\mathbb{H}$ avoiding four general points? 

\section{Octonions}

In view of the results of \cite{ghiloniperotti}, in particular theorem~17, one may easily
modify our arguments in order to obtain a Picard theorem for the
algebra of octonions, namely:
\begin{theorem}
  
  For every non-constant {\em slice regular function}
  $f:{\mathbb O}\to{\mathbb O}$
  the set ${\mathbb O}\setminus f({\mathbb O})$ is contained
  in a real affine hyperplane of $\mathbb O$.
\end{theorem}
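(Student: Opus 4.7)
The plan is to follow the same strategy as in Theorem~\ref{five}, adapted to $\mathbb{O}\simeq\R^8$. Suppose for contradiction that $\mathbb{O}\setminus f(\mathbb{O})$ is not contained in any real affine hyperplane. Then I can select nine values $c_1,\ldots,c_9\in\mathbb{O}$ avoided by $f$ which are in general position, in the sense that they are not contained in any real affine $7$-dimensional subspace. After translating, assume $c_9=0$ and $c_1,\ldots,c_8$ form a real basis of $\mathbb{O}$.

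The first step is to replace Proposition~\ref{2.2} with its octonionic counterpart. The complexification $\mathbb{O}\tensor_{\R}\C\simeq\C^8$ inherits a nondegenerate complex symmetric bilinear form $\left<\ ,\ \right>$ from the Euclidean scalar product on $\mathbb{O}\simeq\R^8$, and Theorem~17 of \cite{ghiloniperotti} identifies, for a stem function $F:\C\to\mathbb{O}\tensor_{\R}\C$ and $c\in\mathbb{O}$, the condition ``there exist $x,y\in\R$ and an imaginary unit $H$ with $f(x+yH)=c$'' with the vanishing of $\QQ{F(x+yi)-c\tensor 1}$. Hence the assumption that $f$ avoids $c_1,\ldots,c_9$ translates into the statement that the holomorphic map
\[
\phi\circ F:\C\to\C^9,\qquad \phi(z)=\bigl(\QQ{z-c_1},\ldots,\QQ{z-c_8},\QQ{z}\bigr)
\]
takes its values in $(\C^*)^9$.

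Next, I would verify, exactly as in the proof of Theorem~\ref{five}, that $\phi:\C^8\to Z$ is a biholomorphism onto an algebraic subvariety $Z\subset\C^9$ cut out by a single equation of the form $p-w^tMw=0$, where $M$ is a symmetric \emph{positive definite} real $8\times 8$ matrix depending only on the $c_i$. Indeed, since $c_1,\ldots,c_8$ form a real basis of $\mathbb{O}$, the linear map $z\mapsto(\left<z,c_1\right>,\ldots,\left<z,c_8\right>)$ is a $\C$-linear isomorphism $\C^8\to\C^8$, and an explicit polynomial inverse $\mu$ of $\phi$ is written down as in the formula for $\mu$ following \eqref{def-B}. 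The reality relation $\phi(\bar z)=\overline{\phi(z)}$ holds automatically.

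Finally, combining the Logarithmic Bloch--Ochiai theorem (Proposition~\ref{LBO}) with the reality constraint through Proposition~\ref{NBO}, any non-constant $\phi\circ F$ would force the existence of a non-trivial one-parameter family $\zeta(z)=(\alpha_1z^{m_1},\ldots,\alpha_9z^{m_9})$ inside $Z$ with $\alpha_i\in\R^*$ and $(m_1,\ldots,m_9)\neq 0$. But the positive-definiteness argument of Proposition~\ref{3.1} uses only that $M$ is real positive definite and that $\QQ{c_i}>0$; it therefore transposes verbatim with the tuple size $5$ replaced by $9$ and the dimension $4$ replaced by $8$, yielding a contradiction. The main obstacle I anticipate is the first step: one needs Ghiloni--Perotti's theorem to deliver the zero-divisor/quadratic-form characterization in the non-associative algebra $\mathbb{O}\tensor_{\R}\C$ precisely in the form ``$c$ is attained on the sphere $x+y\S$ $\iff$ $\QQ{F(x+yi)-c\tensor 1}=0$''. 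Once that correspondence is in place, the rest of the argument is a mechanical transcription of the quaternionic proof, and the theorem follows because the nine chosen values cannot, after all, lie outside $f(\mathbb{O})$.
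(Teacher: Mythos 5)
Your proposal is correct and follows essentially the same route as the paper, which proves this theorem only by the one-line remark that the quaternionic arguments can be modified using Theorem~17 of \cite{ghiloniperotti}; you have simply carried out that modification explicitly (nine general-position values in $\mathbb{O}\simeq\R^8$, the quadric/zero-divisor correspondence from Ghiloni--Perotti in place of Proposition~\ref{prop-quadric}, and the verbatim transposition of Propositions~\ref{NBO} and~\ref{3.1} with $5$ and $4$ replaced by $9$ and $8$). You also correctly identify the only genuinely new ingredient needed, namely that the characterization of attained values via $\QQ{F(x+yi)-c\tensor 1}=0$ survives the non-associativity of $\mathbb{O}\tensor_{\R}\C$, which is exactly what the paper invokes Ghiloni--Perotti for.
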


\end{document}